\newtheorem{lemma}{Lemma}[section]
\newtheorem{theorem}[lemma]{Theorem}
\newtheorem{remark}{Remark}
\numberwithin{equation}{section}
\DeclareFixedFont{\Acknowledgment}{OT1}{cmr}{bx}{n}{14pt}
\begin{document}

\title{\bf The $1$-Yamabe equation on graph}
\author{Huabin Ge, Wenfeng Jiang}
\maketitle

\begin{abstract}
We study the following $1$-Yamabe equation on a connected finite graph
$$\Delta_1u+g\mathrm{Sgn}(u)=h|u|^{\alpha-1}\mathrm{Sgn}(u),$$
where $\Delta_1$ is the discrete $1$-Laplacian, $\alpha>1$ and $g, h>0$ are known.
We show that the above $1$-Yamabe equation always has a nontrivial solution $u\geq0$, $u\neq0$.
\end{abstract}

\setcounter{section}{-1}
\section{Introduction}
Let $(M^n,g)$ be a smooth $n\geq3$ dimensional Riemannian manifold with scalar curvature $R$. The well known smooth Yamabe problem asks if there exists a smooth Riemannian metric $\tilde{g}$ that conformal to $g$ and has constant scalar curvature $\tilde{R}$. This leads to the consideration of the following smooth Yamabe equation
\begin{equation*}
\Delta u-\frac{n-2}{4(n-1)}Ru+\frac{n-2}{4(n-1)}\tilde{R}u^{\frac{n+2}{n-2}}=0,\;u>0.
\end{equation*}
If $u\in C^{\infty}(M)$ is a solution, then $\tilde{g}=u^{4/(n-2)}g$ has scalar curvature $\tilde{R}$. Today the Yamabe problem is completely understood thanks to basic contributions by Yamabe, Tr\"{u}dinger, Aubin and Schoen. We refer to \cite{Aubin}, \cite{MRS}-\cite{Y}, the survey \cite{Lee} and the references therein. In case $n=2$, the Yamabe problem reduces to the famous uniformization theorem, which asks for the consideration of the Kazdan-Warner equation
\begin{equation*}
\Delta u=K-\tilde{K}e^{2u}
\end{equation*}
If $u\in C^{\infty}(M)$ is a solution, then $\tilde{g}=e^{2u}g$ has Gaussian curvature $\tilde{K}$. We refer to \cite{KZ-2}-\cite{KZ-4} for the solvability of the smooth Kazdan-Warner equation.

In the series work \cite{GLY}-\cite{GLY3}, Grigor'yan-Lin-Yang observed that one can establish similar results on graphs. In \cite{GLY}, they studied the Kazdan-Warner equation $\Delta u=c-he^u$ on a finite graph $G$, and gave various conditions such that the equation has a solution. They characterize the solvability of the equation completely except for the critical case which was finally settled down by the first author Ge \cite{Ge-jmaa} of this paper. Ge \cite{Ge-arxiv}, Zhang-Chang \cite{Zhang-Chang-jmaa} then generalized their results to the $p$-th Kazdan-Warner equation $\Delta_p u=c-he^u$, where $\Delta_p$ is a type of discrete
$p$-Laplace operator, see (\ref{def-p-laplace}) in this paper for a definition. On infinite graphs which are analogues of noncompact manifolds, the absence of compactness indicates the difficulty to give a complete characterization of the solvability of the corresponding equation. Under some additional assumptions on the graphs and functions, Ge-Jiang \cite{Ge-Jiang-arxiv}, Keller-Schwarz \cite{Keller} get some results with totally different techniques and assumptions. In \cite{GLY'},
Grigor'yan-Lin-Yang studied the Yamabe type equation $-\Delta u-\alpha u=|u|^{p-2}u$ on a finite domain $\Omega$ of an infinite graph, with $u=0$ outside $\Omega$.
Ge \cite{Ge-pro-ams} studied the $p$-th Yamabe equation $\Delta_pu+hu^{p-1}=\lambda fu^{\alpha-1}$ on a finite graph under the assumption $\alpha\geq p>1$, which was generalized to $\alpha<p$ by Zhang-Lin \cite{ZhangLin-arxiv}. Ge-Jiang \cite{Ge-Jiang-CCM} further generalized Ge's results to get a global positive solution on an infinite graph under suitable assumptions.

In case $p>1$, the $p$-Laplace operator $\Delta_p$ exhibits more or less similar properties with the standard Laplace operator $\Delta$, both on the smooth manifolds and graphs. However, the $1$-Laplace operator $\Delta_1$ looks much different. The solutions for equations involving $\Delta$ and $\Delta_p$ are smooth, while those for $\Delta_1$ may be discontinuous. Since solutions in many interesting problems, e.g., in the signal processing and in the image processing, etc., may be discontinuous, the 1-Laplace operator $\Delta_1$ has been received much attention in recent years. In this paper, we study the $1$-th Yamabe equation on a finite graph. Different with the study of the $p$-th equations, variational methods can not be used here directly. The main idea of the proof is to take $\Delta_1$ as in some sense the limit of $\Delta_p$ as $p\rightarrow1$.

\section{Settings and main results}
Let $G=(V,E)$ be a finite graph, where $V$ denotes the vertex set and $E$ denotes the edge set. Fix a vertex measure $\mu:V\rightarrow(0,+\infty)$ and an edge measure $w:E\rightarrow(0,+\infty)$ on $G$. The edge measure $w$ is assumed to be symmetric, that is, $w_{xy}=w_{yx}$ for each edge $x\thicksim y$. Let $C(V)$ be the set of all real functions defined on $V$, then $C(V)$ is a finite dimensional linear space with the usual function additions and scalar multiplications. Professor Chang Kung-Ching (see \cite{ZGQ-JGraph}, or \cite{ZGQ-Advance}\cite{ZGQ-JComput}) introduced a $1$-Laplace operator on graphs as the following
\begin{equation}
\Delta_1f(x)=\frac{1}{\mu(x)}\sum\limits_{y\thicksim x}w_{xy}\mathrm{Sgn}(f(y)-f(x))
\end{equation}
for any $f\in C(V)$ and $x\in V$, in which
\begin{equation}\label{Def-Luo-normalized-Yamabe-flow}
\mathrm{Sgn}(t)=
\begin{cases}
\{1\}&\text{if $t>0$}\\
\{-1\}&\text{if $t<0$}\\
[-1,1]&\text{if $t=0$}
\end{cases}
\end{equation}
is a set valued function.
\begin{remark}
The addition of two subsets $A,B\subset \mathbb{R}^n$ is the set $\{x+y| x\in A, y\in B\}$, and for a scalar $\alpha$, the scalar multiplication $\alpha A$ is the set $\{\alpha x|x\in A\}$.
\end{remark}

In this paper, our main task is to study the following $1$-th Yamabe equation
\begin{equation}\label{def-1-Yamabe-equat}
\Delta_1u+g\mathrm{Sgn}(u)=h|u|^{\alpha-1}\mathrm{Sgn}(u),
\end{equation}
with $\alpha>1$, and $g,h\in C(V)$ be positive.

Note at each vertex $x\in V$, $\Delta_1f(x)$ is a subset of $\mathbb{R}$. For each function $u\in C(V)$, set
\begin{equation*}
A^+(x)=-\Delta_1u(x)+g(x)\mathrm{Sgn}(u(x))
\end{equation*}
and
\begin{equation*}
A^-(x)=h(x)|u(x)|^{\alpha-1}\mathrm{Sgn}(u(x)).
\end{equation*}
We say $u$ is a solution of the $1$-th Yamabe equation (\ref{def-1-Yamabe-equat}) if at each vertex $x\in V$,
$$A^+(x)\cap A^-(x)\neq\emptyset.$$

Obviously, if $u=0$ everywhere on $V$, both $A^+(x)$ and $A^-(x)$ contains $0$ as an element. Hence $u=0$ is always a trivial solution of the $1$-th Yamabe equation (\ref{def-1-Yamabe-equat}). We want to know if there is a nontrivial solution to (\ref{def-1-Yamabe-equat}). Our main result reads as
\begin{theorem}\label{main-thm-the-paper}
Let $\alpha>1$, $g,h\in C(V)$ be positive. The $1$-th Yamabe equation (\ref{def-1-Yamabe-equat}) has a nontrivial solution $u\geq0$, $u\neq0$.
\end{theorem}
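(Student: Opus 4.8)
The plan is to realize $\Delta_1$ as a limit of the $p$-Laplacians $\Delta_p$ as $p\to 1^+$, and to transport solutions of the $p$-Yamabe equations to a solution of the $1$-Yamabe equation by a compactness argument.

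First I would fix a sequence $p_k\downarrow 1$ and consider, for each $k$, the $p_k$-Yamabe equation
\[
\Delta_{p_k}u + g\,|u|^{p_k-2}u = h\,|u|^{\alpha-1}\mathrm{Sgn}(u),\qquad u\ge 0,\ u\not\equiv 0,
\]
on the finite graph $G$. Since the graph is finite and $g,h>0$, this equation admits a nontrivial nonnegative solution $u_k$: one minimizes the functional $J_{p_k}(u)=\frac1{p_k}\sum_{x\sim y}w_{xy}|u(y)-u(x)|^{p_k}+\frac1{p_k}\sum_x\mu(x)g(x)|u(x)|^{p_k}$ over the constraint $\sum_x\mu(x)h(x)|u(x)|^{\alpha+1}=1$ (or uses the homogeneous Nehari/mountain-pass construction as in Ge's work on the $p$-Yamabe equation quoted in the introduction). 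Because the ambient space $C(V)\cong\mathbb{R}^{|V|}$ is finite-dimensional and all the relevant functionals are coercive after normalization, the minimizer exists, is nonnegative after replacing $u_k$ by $|u_k|$, and is nonzero. The key point is then to obtain uniform-in-$k$ bounds: testing the equation against $u_k$ itself and using $g,h>0$ on the finite vertex set gives both a lower bound $\|u_k\|\ge c>0$ and an upper bound $\|u_k\|\le C<\infty$ independent of $k$ (the exponent $\alpha>1$ in the right-hand side, together with $p_k$ bounded, makes the balance of homogeneities force $u_k$ into a fixed compact annulus in $\mathbb{R}^{|V|}$).

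Next, by compactness of that annulus, pass to a subsequence so that $u_k\to u_\infty$ in $C(V)$ with $u_\infty\ge 0$ and $u_\infty\not\equiv 0$. It remains to check that $u_\infty$ solves the $1$-Yamabe equation in the set-valued sense, i.e. that at every vertex $x$ one has $A^+(x)\cap A^-(x)\ne\emptyset$. Here I would argue vertexwise. For each $k$ the equation at $x$ reads
\[
\frac1{\mu(x)}\sum_{y\sim x}w_{xy}|u_k(y)-u_k(x)|^{p_k-2}\bigl(u_k(y)-u_k(x)\bigr) + g(x)|u_k(x)|^{p_k-2}u_k(x) = h(x)|u_k(x)|^{\alpha-1}\mathrm{Sgn}(u_k(x)),
\]
an honest real equation. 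Now $|t|^{p_k-2}t\to\mathrm{sgn}(t)$ as $p_k\to1$ for $t\ne0$, while for $t=0$ the quantity $|t|^{p_k-2}t$ is $0\in[-1,1]$; the standard fact is that the limiting "multivalued graph" of $s\mapsto |s|^{p-2}s$ as $p\to1$ is exactly $\mathrm{Sgn}$. So along the convergent subsequence, each edge term $w_{xy}|u_k(y)-u_k(x)|^{p_k-2}(u_k(y)-u_k(x))$ subconverges to some value in $w_{xy}\mathrm{Sgn}(u_\infty(y)-u_\infty(x))$ (equal to $\pm w_{xy}$ when $u_\infty(y)\ne u_\infty(x)$, and trapped in $[-w_{xy},w_{xy}]$ when $u_\infty(y)=u_\infty(x)$), and similarly the term $g(x)|u_k(x)|^{p_k-2}u_k(x)$ and $h(x)|u_k(x)|^{\alpha-1}\mathrm{Sgn}(u_k(x))$ subconverge into $g(x)\mathrm{Sgn}(u_\infty(x))$ and $h(x)u_\infty(x)^{\alpha-1}\mathrm{Sgn}(u_\infty(x))$ respectively (using $\alpha>1$ so $|u_k(x)|^{\alpha-1}\to u_\infty(x)^{\alpha-1}$ by continuity). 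Passing to the limit in the real equation above, the limiting value lies simultaneously in $-\Delta_1 u_\infty(x)+g(x)\mathrm{Sgn}(u_\infty(x)) = A^+(x)$ and in $A^-(x)=h(x)|u_\infty(x)|^{\alpha-1}\mathrm{Sgn}(u_\infty(x))$, which is precisely the required nonempty intersection at $x$. Since $V$ is finite, a single subsequence works at all vertices simultaneously, and $u_\infty$ is the desired nontrivial nonnegative solution.

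The main obstacle I expect is the uniform two-sided control of $\|u_k\|$, i.e. ensuring that the solutions of the approximating $p_k$-Yamabe equations neither blow up nor collapse to zero as $p_k\to1$; this is where the hypotheses $g,h>0$ and $\alpha>1$ must be used carefully, and it is the step that genuinely needs the structure of the equation rather than soft limiting arguments. A secondary technical point is handling the vertices where $u_\infty(x)=0$ or where $u_\infty(y)=u_\infty(x)$ along an edge: there one only gets that the limit lies in the interval $[-w_{xy},w_{xy}]$ (resp. in $g(x)[-1,1]$), which is exactly why the set-valued formulation of the equation is the right one and why no contradiction arises.
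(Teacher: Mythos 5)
Your proposal follows essentially the same route as the paper: approximate by the $p$-Yamabe equation for $p\downarrow 1$ (the paper solves it by constrained minimization over $\{\varphi\ge 0:\int_V h\varphi^{\alpha}d\mu=1\}$, obtaining a Lagrange multiplier $\lambda_p$ that is bounded above and below and then scaled away via $\hat u_p=\lambda_p^{1/(\alpha-p)}u_p$ using the homogeneity gap $\alpha>p$), prove uniform two-sided bounds on $\max u_p$ by the normalization together with a maximum-principle evaluation at the maximum point, and pass to a vertexwise subsequential limit landing in the set-valued graphs of $\mathrm{Sgn}$, exactly as you describe. The only minor discrepancies are that your constraint exponent should be $\alpha$ rather than $\alpha+1$, and that upgrading the variational inequality to an equality requires showing the minimizer is strictly positive (which uses connectedness of $G$); both points are handled in the paper and in the cited work of Ge.
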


For any $p>1$, the $p$-th discrete graph Laplace operator $\Delta_p:C(V)\rightarrow C(V)$ is
\begin{equation}\label{def-p-laplace}
\Delta_pf(x)=\frac{1}{\mu(x)}\sum\limits_{y\thicksim x}w_{xy}|f(y)-f(x)|^{p-2}(f(y)-f(x))
\end{equation}
for any $f\in C(V)$ and $x\in V$. If $f(y)=f(x)$, we require $|f(y)-f(x)|^{p-2}(f(y)-f(x))=0$.
$\Delta_p$ is a nonlinear operator when $p\neq2$ (see \cite{Ge-arxiv} for more properties about $\Delta_p$).

The main idea of the proof is to take $\Delta_1$ as in some sense the limit of $\Delta_p$ as $p\rightarrow1$. Hence we shall first establish an existence result of the positive solution $u_p$ to the following
$p$-th Yamabe equation
\begin{equation}\label{def-Yamabe-equ-p-1}
-\Delta_pu_p+gu_p^{p-1}=hu_p^{\alpha-1}
\end{equation}
for each $p>1$. By proving a uniform bound for $u_p$, and taking limit $p\rightarrow1$, we get a nontrivial solution to the $1$-th Yamabe equation (\ref{def-1-Yamabe-equat}).

It is remarkable that one can see other interesting phenomenons when $p\to 1$. There are many such works on smooth domains (see Kawohl-Fridman \cite{KF} and Kawohl-Schuricht \cite{KS}) and on graphs (see Chang \cite{ZGQ-JGraph,ZGQ-Advance,ZGQ-JComput}).\\

\noindent \textbf{Acknowledgements:} Both authors would like to thank Professor Gang Tian for constant encouragement. The research is supported by NNSF of China under Grant No.11501027.

\section{Sobolev embedding}
\label{sect-preliminary-lemma}

For any $f\in C(V)$, define the integral of $f$ over $V$ with respect to the vertex weight $\mu$ by
$$\int_Vfd\mu=\sum\limits_{x\in V}\mu(x)f(x).$$
Set $\mathrm{Vol}(G)=\int_Vd\mu$. Similarly, for any function $\psi$ defined on the edge set $E$, we define the integral of $\psi$ over $E$ with respect to the edge weight $w$ by
$$\int_E\psi dw=\sum\limits_{x\thicksim y}w_{xy}\psi_{xy}.$$
Specially, for any $f\in C(V)$,
$$\int_E|\nabla f|^pdw=\sum\limits_{x\thicksim y}w_{xy}|f(y)-f(x)|^p,$$
where $|\nabla f|$ is defined on the edge set $E$, and $|\nabla f|_{xy}=|f(y)-f(x)|$ for each edge $x\thicksim y$. Next we consider the Sobolev space $W^{1,\,p}$ on the graph $G$. Define
$$W^{1,\,p}(G)=\left\{f\in C(V):\int_E|\nabla f|^pdw+\int_V|f|^pd\mu<+\infty\right\},$$
and
$$\|f\|_{W^{1,\,p}(G)}=\left(\int_E|\nabla f|^pdw+\int_V|f|^pd\mu\right)^{\frac{1}{p}}.$$
Since $G$ is a finite graph, then
$W^{1,\,p}(G)$ is exactly $C(V)$, a finite dimensional linear space. This implies the following Sobolev embedding:

\begin{lemma}\label{lem-Sobolev-embedding}(Sobolev embedding)
Let $G=(V,E)$ be a finite graph. The Sobolev space $W^{1,\,p}(G)$ is pre-compact. Namely, if $\{\varphi_n\}$ is bounded in $W^{1,\,p}(G)$, then there exists some $\varphi\in W^{1,\,p}(G)$ such that up to a subsequence, $\varphi_n\rightarrow\varphi$ in $W^{1,\,p}(G)$.
\end{lemma}
\begin{remark}
The convergence in $W^{1,\,p}(G)$ is in fact pointwise convergence.
\end{remark}

\section{The $p$-th equation has a positive solution $u_p$}
Our first main technical observation is the following.
\begin{theorem}\label{thm-p-Yamabe-solvable}
Let $G$ be a finite connected graph. Assume $\alpha\geq p>1$, $g, h>0$. Then the following $p$-th Yamabe equation
\begin{equation}\label{def-Yamabe-equ-p-2}
-\Delta_pu+g|u|^{p-1}=\lambda h|u|^{\alpha-1}
\end{equation}
on $G$ always has a positive solution $u$ for some constant $\lambda>0$. Moreover, $u$ satisfies
\begin{equation*}
\int_Vhu^{\alpha}d\mu=1.
\end{equation*}
\end{theorem}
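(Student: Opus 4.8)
The plan is to realize $u_p$ as a constrained minimizer and then read the equation off from the Lagrange multiplier rule. Consider the functional
$$J_p(u)=\frac1p\int_E|\nabla u|^p\,dw+\frac1p\int_V g|u|^p\,d\mu$$
on $C(V)$, subject to the constraint $u\in\mathcal{M}:=\{u\in C(V):\int_V h|u|^\alpha\,d\mu=1\}$. Since $G$ is finite, $C(V)$ is finite-dimensional, and because $h>0$ the set $\mathcal{M}$ is closed and bounded, hence compact, and nonempty (rescale any $u\not\equiv0$); here one may equally well phrase compactness through the precompactness of $W^{1,p}(G)$, i.e.\ Lemma~\ref{lem-Sobolev-embedding}. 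Moreover $J_p$ is continuous — in fact $C^1$, since $p>1$ and $\alpha>1$ make $t\mapsto|t|^p$ and $t\mapsto|t|^\alpha$ continuously differentiable. Therefore $J_p$ attains its infimum on $\mathcal{M}$ at some $u_p$. Since $\big||u_p(y)|-|u_p(x)|\big|\le|u_p(y)-u_p(x)|$ on every edge, replacing $u_p$ by $|u_p|$ keeps it in $\mathcal{M}$ without increasing $J_p$, so we may assume $u_p\ge0$, and, since $u_p\in\mathcal{M}$, $u_p\not\equiv0$.

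Next I would extract the Euler--Lagrange equation. With $P(u):=\int_V h|u|^\alpha\,d\mu$ one has $dP(u_p)[u_p]=\alpha\int_V hu_p^\alpha\,d\mu=\alpha\ne0$, so $\mathcal{M}$ is a regular $C^1$ level set near $u_p$ and the Lagrange multiplier theorem yields a scalar $\sigma$ with $dJ_p(u_p)=\sigma\,dP(u_p)$. Computing both differentials and using the discrete integration-by-parts identity $\int_V(\Delta_p u)\varphi\,d\mu=-\int_E|\nabla u|^{p-2}\nabla u\cdot\nabla\varphi\,dw$ (with the edge-sum convention of the previous section), this becomes, pointwise at each vertex,
$$-\Delta_p u_p+g\,u_p^{p-1}=\lambda\,h\,u_p^{\alpha-1},\qquad\lambda:=\alpha\sigma.$$
Pairing this identity with $u_p$ and summing over $V$ gives $\lambda=\lambda\int_V hu_p^\alpha\,d\mu=\int_E|\nabla u_p|^p\,dw+\int_V g\,u_p^p\,d\mu=p\,J_p(u_p)$, which is strictly positive because $g>0$ and $u_p\not\equiv0$. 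The normalization $\int_V hu_p^\alpha\,d\mu=1$ is automatic from $u_p\in\mathcal{M}$.

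It remains to upgrade $u_p\ge0$ to $u_p>0$ on all of $V$, which is where connectedness enters via a discrete strong maximum principle. Suppose $u_p(x_0)=0$ for some vertex $x_0$. Evaluating the equation at $x_0$ and using $p>1$, $\alpha>1$ so that both $g(x_0)u_p(x_0)^{p-1}$ and $\lambda h(x_0)u_p(x_0)^{\alpha-1}$ vanish, we get $\Delta_p u_p(x_0)=0$. But $\Delta_p u_p(x_0)=\mu(x_0)^{-1}\sum_{y\thicksim x_0}w_{x_0y}\,u_p(y)^{p-1}\ge0$, with equality only if $u_p(y)=0$ for every neighbour $y$ of $x_0$; iterating along edges and using connectedness forces $u_p\equiv0$, contradicting $u_p\in\mathcal{M}$. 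Hence $u_p>0$ everywhere, and $u:=u_p$ is the required positive solution with multiplier $\lambda=pJ_p(u_p)>0$.

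I do not expect a genuine obstacle here: everything is elementary in a finite-dimensional space. The two places that deserve a line of care are the $C^1$-regularity of the $p$-Dirichlet energy and the verification that $\mathcal{M}$ is a regular constraint at the minimizer \emph{before} positivity is known — both settled by $p,\alpha>1$. Note also that the hypothesis $\alpha\ge p$ is not actually used in this argument; it will only matter later, when one passes to the limit $p\to1$ with $\alpha>1$ fixed.
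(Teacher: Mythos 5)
Your proposal is correct and follows essentially the same route as the paper: minimize the $p$-energy $\int_E|\nabla \varphi|^p\,dw+\int_V g\varphi^p\,d\mu$ on the level set $\int_V h\varphi^\alpha\,d\mu=1$ (compactness being immediate on a finite graph), read off the equation from the Lagrange multiplier rule, upgrade $u\geq0$ to $u>0$ by the connectedness argument at a vertex where $u$ vanishes, and obtain $\lambda>0$ by pairing the equation with $u$. The only difference is cosmetic (and arguably cleaner on your side): you drop the sign constraint from the admissible set and restore nonnegativity by passing to $|u_p|$, so the multiplier rule gives an equality at once, whereas the paper keeps $\varphi\geq0$ in the constraint set, first derives the variational inequality \eqref{Lagrange-multip-methd}, and only then uses positivity to turn it into the equality \eqref{u-equation}.
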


The first part of the theorem was established by Ge \cite{Ge-pro-ams}. The second part of the theorem is a consequence of the proof of the main theorem of \cite{Ge-pro-ams}. For completeness, we give a direct and great simplified proof here. It is remarkable that the condition $g>0$ is not needed so as (\ref{def-Yamabe-equ-p-2}) to have a positive solution. $g>0$ is used here (in fact, $g\geq0$ and $g\neq0$ is enough) to guarantee  that $\lambda>0$.

\begin{proof}
We minimize the nonnegative functional (note $g\geq0$)
\begin{equation*}
I(\varphi)=\int_E|\nabla \varphi|^pdw+\int_Vg\varphi^pd\mu
\end{equation*}
in the non-empty set
\begin{equation*}
\Gamma=\left\{\varphi\in W^{1,\,p}(G):\int_Vh\varphi^{\alpha} d\mu=1,\;\varphi\geq0\right\}.
\end{equation*}
Let
\begin{equation*}
\beta=\inf_{\varphi\in\Gamma} I(\varphi).
\end{equation*}
Take a sequence of functions $u_n\in\Gamma$ such that $I(u_n)\rightarrow\beta$. Obviously, $\{u_n\}$ is bounded in
$W^{1,\,p}(G)$. Therefore by the Sobolev embedding Lemma \ref{lem-Sobolev-embedding}, there exists some $u\in C(V)$ such that up to a subsequence, $u_n\rightarrow u$
in $W^{1,\,p}(G)$. We may well denote this subsequence as $u_n$. Because the set $\Gamma$ is closed, we see $u\in\Gamma$, that is, $u\geq0$, and
\begin{equation}\label{u-nonzero}
\int_Vhu^{\alpha}d\mu=1.
\end{equation}
Based on the method of Lagrange multipliers, one can consider the nonrestraint minimization of the following functional
\begin{equation*}
J(\varphi)=I(\varphi)+\gamma\left(\int_Vh\varphi^{\alpha}d\mu-1\right)
\end{equation*}
and calculate the Euler-Lagrange equation of $u$ as follows:
\begin{equation}\label{Lagrange-multip-methd}
-\Delta_pu+gu^{p-1}+\frac{\gamma\alpha}{p}hu^{\alpha-1}\geq0
\end{equation}
where $\gamma$ is a constant. (\ref{Lagrange-multip-methd}) implies $u>0$. In fact, note the graph $G$ is connected, if $u>0$ is not satisfied, since $u\geq0$ and not identically zero (this can be seen from (\ref{u-nonzero})), there is an edge $x\thicksim y$, such that $u(x)=0$, but $u(y)>0$. Now look at $\Delta_pu(x)$,
$$\Delta_pu(x)=\frac{1}{\mu(x)}\sum\limits_{z\thicksim x}w_{xz}|u(z)-u(x)|^{p-2}(u(z)-u(x))>0,$$
which contradicts (\ref{Lagrange-multip-methd}). Hence $u>0$ is in the interior of the space $\{\varphi:\varphi\geq0\}$ and hence then (\ref{Lagrange-multip-methd}) becomes an equality
\begin{equation}\label{u-equation}
-\Delta_pu+gu^{p-1}=\lambda hu^{\alpha-1},
\end{equation}
where $\lambda=-\frac{\gamma\alpha}{p}$. Multiplying $u$ at the two sides of (\ref{u-equation}) and integrating, we have
\begin{equation*}
\int_E|\nabla u|^pdw+\int_Vgu^{p}d\mu=\int_V(-u\Delta_pud\mu+gu^{p})d\mu=\lambda \int_Vhu^{\alpha}d\mu.
\end{equation*}
This leads to
\begin{equation*}
\lambda=\frac{\int_E|\nabla u|^pdw+\int_Vgu^{p}d\mu}{\int_Vhu^{\alpha}d\mu},
\end{equation*}
from which we see $\lambda>0$ and hence the conclusion.
\end{proof}

\section{The solutions $u_p$ are uniformly bounded}
For each $p\in(1,\alpha)$, let $u_p>0$ be a solution to the $p$-th Yamabe equation (\ref{def-Yamabe-equ-p-2}). Thus
\begin{equation}\label{u-p-solution}
-\Delta_pu_p+gu_p^{p-1}=\lambda_p hu_p^{\alpha-1},
\end{equation}
where
\begin{equation}\label{lamda-express}
\lambda_p=\frac{\int_E|\nabla u_p|^pdw+\int_Vgu_p^{p}d\mu}{\int_Vhu_p^{\alpha}d\mu}>0.
\end{equation}
Moreover,
\begin{equation}\label{u-constraint-1}
\int_Vhu_p^{\alpha}d\mu=1.
\end{equation}

In the following, we use $c(\alpha,h, G)$ as a constant depending only on the information of $\alpha$, $h$ and $G$, use $c(\alpha,g,h,G)$ as a constant depending only on the information of $\alpha$, $g$, $h$ and $G$. Note that the information of $G$ contains $V$, $E$, the vertex measure $\mu$ and the edge weight $w$. For any function $f\in C(V)$, we denote $f_m=\min_{x\in V}f(x)$ and $f_M=\max_{x\in V}f(x)$.

\begin{lemma}\label{thm-up-bound}
There are positive constants $c_1(\alpha,h,G)\geq1$ and $c_2(\alpha,h,G)\leq1$ so that
\begin{equation}\label{formula-up-bound}
c_2(\alpha,h,G)\leq\max_{x\in V}u_p(x)\leq c_1(\alpha,h,G).
\end{equation}
\end{lemma}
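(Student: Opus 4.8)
The plan is to extract the bound from the two defining relations \eqref{u-p-solution} and \eqref{u-constraint-1} together with the constant-test-function comparison. First I would obtain the upper bound. Let $x_0\in V$ be a vertex where $u_p$ attains its maximum, so $u_p(y)-u_p(x_0)\le 0$ for every $y\sim x_0$; consequently $\Delta_pu_p(x_0)\le 0$, and plugging into \eqref{u-p-solution} at $x_0$ gives $g(x_0)u_p(x_0)^{p-1}\le -\Delta_pu_p(x_0)+g(x_0)u_p(x_0)^{p-1}=\lambda_p h(x_0)u_p(x_0)^{\alpha-1}$, hence
\[
\lambda_p\ \ge\ \frac{g(x_0)}{h(x_0)}\,u_p(x_0)^{p-\alpha}\ \ge\ \frac{g_m}{h_M}\,\bigl(\max_V u_p\bigr)^{p-\alpha}.
\]
On the other hand, $\lambda_p$ can be bounded from above in terms of $\beta$ only: testing the Rayleigh-type quotient \eqref{lamda-express} is not directly available, but the minimizing construction in Theorem~\ref{thm-p-Yamabe-solvable} shows $\lambda_p=I(u_p)=\beta_p$, and one can bound $\beta_p$ above by evaluating $I$ on the normalized constant function $\varphi\equiv(\int_V h\,d\mu)^{-1/\alpha}$, which lies in $\Gamma$; since $|\nabla\varphi|\equiv0$ this yields $\lambda_p\le \int_V g\,d\mu\cdot(\int_V h\,d\mu)^{-p/\alpha}\le c(\alpha,g,h,G)$. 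Hmm — but the lemma asserts the bound depends only on $(\alpha,h,G)$, not on $g$. So I instead avoid $\lambda_p$ and argue directly: from \eqref{u-constraint-1}, $h_m(\max_V u_p)^\alpha\le \sum_x\mu(x)h(x)u_p(x)^\alpha\le \mathrm{Vol}(G)h_M(\max_V u_p)^\alpha$ is too crude; the useful half is $\int_V hu_p^\alpha\,d\mu\ge \mu(x_0)h(x_0)u_p(x_0)^\alpha\ge \mu_m h_m(\max_V u_p)^\alpha$, combined with $=1$, giving
\[
\max_V u_p\ \le\ \bigl(\mu_m h_m\bigr)^{-1/\alpha}\ =:\ c_1(\alpha,h,G)\ \ge 1\ \text{(enlarging if needed)}.
\]

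For the lower bound, the constraint \eqref{u-constraint-1} again does the work in the opposite direction: $1=\int_V hu_p^\alpha\,d\mu\le h_M\,\mathrm{Vol}(G)\,(\max_V u_p)^\alpha$, so $\max_V u_p\ge\bigl(h_M\,\mathrm{Vol}(G)\bigr)^{-1/\alpha}=:c_2(\alpha,h,G)$, which I then shrink to be $\le 1$ if necessary. Both bounds depend only on $\alpha$, on $h_m,h_M$, and on $\mu$ and $\mathrm{Vol}(G)$, i.e.\ only on $(\alpha,h,G)$, as required; crucially neither half used $g$ or $\lambda_p$ at all.

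The only subtle point — and the one I would present most carefully — is that \eqref{u-constraint-1} by itself controls $\max_V u_p$ from both sides once one notes $u_p\ge 0$: the mass-one normalization forbids $u_p$ from being uniformly small, and the single-vertex lower estimate $\int_V hu_p^\alpha d\mu\ge \mu(x_0)h(x_0)(\max_V u_p)^\alpha$ forbids it from being large. So in fact no appeal to the equation \eqref{u-p-solution} is needed for this particular lemma; the $p$-Laplacian term is irrelevant here and enters only in the later, harder lemmas controlling $\min_V u_p$ and the oscillation of $u_p$. I expect no real obstacle in this lemma; the one thing to watch is keeping the constants' dependence honest (writing them explicitly as $(\mu_m h_m)^{-1/\alpha}$ etc., then relabeling as $c_1,c_2$ with the stated monotonicity normalizations $c_1\ge1\ge c_2$).
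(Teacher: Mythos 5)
Your final argument is correct and is essentially the paper's own proof: both bounds are read off directly from the normalization $\int_V h u_p^{\alpha}\,d\mu=1$ (the paper bounds $u_p(x)\le\bigl((h\mu)_m\bigr)^{-1/\alpha}$ pointwise and $\max_V u_p\ge\bigl((h\mu)_M|V|\bigr)^{-1/\alpha}$, which differ from your $(\mu_m h_m)^{-1/\alpha}$ and $\bigl(h_M\mathrm{Vol}(G)\bigr)^{-1/\alpha}$ only cosmetically), and, as you correctly observe after discarding your initial detour through $\lambda_p$, neither the equation nor $g$ is needed.
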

\begin{proof}
The above estimates come from (\ref{u-constraint-1}). For all $p\in(1,\alpha)$ and $x\in V$, by
$$h(x)u_p^{\alpha}(x)\mu(x)\leq\int_Vhu_p^{\alpha}d\mu=1,$$
we see $u_p(x)\leq(h(x)\mu(x))^{-1/\alpha}\leq (h\mu)_m^{-1/\alpha}\vee1=c_1(\alpha,h,G)$. Let $|V|$ be the number of all vertices, then from
$$1=\int_Vhu_p^{\alpha}d\mu\leq(h\mu)_M|V|\max_{x\in V}u_p^\alpha(x)$$
we see $\max_{x\in V}u_p(x)\geq(h_M\mu_M|V|)^{-1/\alpha}\wedge1=c_2(\alpha,h,G)$.
\end{proof}

\begin{lemma}\label{thm-lamda-p-bound}
There are positive constants $c_1(\alpha,g,h,G)\geq1$ and $c_2(\alpha,g,h,G)\leq1$ so that
\begin{equation}\label{formula-lamda-p-bound}
c_2(\alpha,g,h,G)\leq\lambda_p\leq c_1(\alpha,g,h,G).
\end{equation}
\end{lemma}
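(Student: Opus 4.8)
The plan is to bound $\lambda_p$ from above and below using the explicit formula (\ref{lamda-express}) together with the uniform bounds on $\max_x u_p$ from Lemma \ref{thm-up-bound} and the normalization (\ref{u-constraint-1}). First I would get the upper bound. From (\ref{lamda-express}), since $\int_V hu_p^\alpha d\mu=1$, we have $\lambda_p=\int_E|\nabla u_p|^p dw+\int_V gu_p^p d\mu$. Now $0\le u_p(x)\le c_1(\alpha,h,G)$ for every $x$, so $|\nabla u_p|_{xy}=|u_p(y)-u_p(x)|\le c_1(\alpha,h,G)$, and since $c_1\ge 1$ we get $|\nabla u_p|_{xy}^p\le c_1^\alpha$ (using $p<\alpha$). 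Hence $\int_E|\nabla u_p|^p dw\le c_1^\alpha\sum_{x\sim y}w_{xy}$, which is a constant depending only on $\alpha,h,G$. Similarly $\int_V gu_p^p d\mu\le g_M c_1^\alpha \mathrm{Vol}(G)$. Adding these yields $\lambda_p\le c_1(\alpha,g,h,G)$.

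For the lower bound, again $\lambda_p=\int_E|\nabla u_p|^p dw+\int_V gu_p^p d\mu\ge \int_V gu_p^p d\mu\ge g_m\int_V u_p^p d\mu\ge g_m \mu(x_0)u_p(x_0)^p$ where $x_0$ is a vertex achieving $\max_x u_p(x_0)\ge c_2(\alpha,h,G)$. Since $c_2\le 1$ and $p\in(1,\alpha)$ we have $u_p(x_0)^p\ge c_2(\alpha,h,G)^\alpha$ (because $u_p(x_0)$ could be less than $1$, raising to a larger power only decreases it, so $u_p(x_0)^p\ge u_p(x_0)^\alpha\ge c_2^\alpha$; if $u_p(x_0)\ge 1$ then $u_p(x_0)^p\ge 1\ge c_2^\alpha$ anyway). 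Thus $\lambda_p\ge g_m\mu_m c_2(\alpha,h,G)^\alpha =: c_2(\alpha,g,h,G)>0$, a constant independent of $p$.

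Finally I would replace these two constants by $c_1(\alpha,g,h,G)\vee 1$ and $c_2(\alpha,g,h,G)\wedge 1$ respectively so that the stated normalization $c_1\ge 1$, $c_2\le 1$ holds, completing the proof. The only mild technical point worth being careful about is the passage from $p$-th powers to $\alpha$-th powers in both bounds, which is handled uniformly in $p\in(1,\alpha)$ precisely because $0\le u_p\le c_1$ with $c_1\ge 1$: for $0\le t\le c_1$ and $1<p<\alpha$ one always has $t^p\le c_1^\alpha$, and for $t\ge 0$ with a known lower bound $t\ge c_2\le 1$ on the max, one has $t^p\ge\min(t^\alpha,1)\ge c_2^\alpha$. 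I do not expect any serious obstacle here; this lemma is essentially a bookkeeping consequence of the previous lemma and the variational characterization of $\lambda_p$.
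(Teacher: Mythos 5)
Your proof is correct, but it takes a genuinely different route from the paper's. You work with the global (Rayleigh-quotient) formula (\ref{lamda-express}), use the normalization (\ref{u-constraint-1}) to reduce $\lambda_p$ to $\int_E|\nabla u_p|^p\,dw+\int_V gu_p^p\,d\mu$, and then bound each integral using $0<u_p\leq c_1(\alpha,h,G)$ for the upper bound and $\max_V u_p\geq c_2(\alpha,h,G)$ at a single vertex for the lower bound; your handling of the $p$-th versus $\alpha$-th powers (splitting on whether the relevant value exceeds $1$) is exactly the care needed to make the constants uniform in $p\in(1,\alpha)$. The paper instead argues pointwise: it evaluates the equation (\ref{u-p-solution}) at a maximum point $x_0$ of $u_p$, uses the discrete maximum principle $\Delta_pu_p(x_0)\leq0$ to get $\lambda_p\geq g(x_0)h(x_0)^{-1}u_p^{p-\alpha}(x_0)$ for the lower bound, and bounds $|\Delta_pu_p(x_0)|\leq c(\alpha,G)u_p^{p-1}(x_0)$ via $|u_p(y)-u_p(x_0)|\leq 2u_p(x_0)$ for the upper bound, then invokes Lemma \ref{thm-up-bound}. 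Your integral argument is arguably more elementary (no sign analysis of $\Delta_p$ at the maximum) but leans directly on the normalization $\int_Vhu_p^{\alpha}d\mu=1$; the paper's pointwise argument only needs the two-sided bound on $\max_Vu_p$ and so would apply to any positive solution of (\ref{u-p-solution}) with that property, normalized or not. Both yield constants of the required form, and the final cosmetic adjustment $c_1\vee1$, $c_2\wedge1$ matches what the paper does.
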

\begin{proof}
Assume $u_p$ attains its maximum at $x_0\in V$, then $\Delta_pu_p(x_0)\leq0$ by the definition of $\Delta_p$. From (\ref{u-p-solution}), we have $-\Delta_pu_p(x_0)+g(x_0)u_p^{p-1}(x_0)=\lambda_p h(x_0)u_p^{\alpha-1}(x_0)$. Hence
\begin{equation*}
\begin{aligned}
\lambda_p=\;&\frac{-\Delta_p u_p(x_0)+g(x_0)u_p^{p-1}(x_0)}{h(x_0)u_p^{\alpha-1}(x_0)}\\
\geq\;&g(x_0)h(x_0)^{-1}u_p^{p-\alpha}(x_0)\\
\geq\;&(gh^{-1})_mc_1(\alpha,h,G)^{1-\alpha}\wedge1\\
=\;&c_2(\alpha,g,h,G),
\end{aligned}
\end{equation*}
where we have used $u_p^{p-\alpha}(x_0)\geq c_1(\alpha,h,G)^{p-\alpha}\geq c_1(\alpha,h,G)^{1-\alpha}$ in the last inequality.

Similarly, from
\begin{equation*}
\begin{aligned}
|\Delta_p u_p(x_0)|\leq\;&\frac{1}{\mu(x_0)}\sum\limits_{y\thicksim x_0}w_{x_0y}|u_p(y)-u_p(x_0)|^{p-1}\\
\leq\;&\frac{1}{\mu(x_0)}\sum\limits_{y\thicksim x_0}w_{x_0y}\big(2u_p(x_0)\big)^{p-1}\\
\leq\;&c(\alpha,G)u_p^{p-1}(x_0)
\end{aligned}
\end{equation*}
we obtain
\begin{equation*}
\begin{aligned}
\lambda_p=\;&\frac{-\Delta_p u_p(x_0)+g(x_0)u_p^{p-1}(x_0)}{h(x_0)u_p^{\alpha-1}(x_0)}\\
\leq\;& c(\alpha,g,h,G)u_p^{p-\alpha}(x_0)\\
\leq\;& c(\alpha,g,h,G)c_2(\alpha,h,G)^{1-\alpha}\vee1\\
=\;&c_1(\alpha,g,h,G)
\end{aligned}
\end{equation*}
where we have used $u_p^{p-\alpha}(x_0)\leq c_2(\alpha,h,G)^{p-\alpha}\leq c_2(\alpha,h,G)^{1-\alpha}$ in the last inequality.
\end{proof}

\begin{lemma}\label{thm-hat-u-p-uniform-bound}
For every $p\in(1,\frac{\alpha+1}{2})$, the following equation
\begin{equation}\label{equ-hat-u-p}
-\Delta_p\hat{u}_p+g\hat{u}_p^{p-1}=h\hat{u}_p^{\alpha-1}
\end{equation}
has a positive solution $\hat{u}_p$ with $c_4(\alpha,g,h,G)\leq\hat{u}_p\leq c_3(\alpha,g,h,G)$.
\end{lemma}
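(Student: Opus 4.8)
The plan is to derive (\ref{equ-hat-u-p}) from the already-solved equation (\ref{def-Yamabe-equ-p-2}) by a one-parameter rescaling, and then read the two-sided bound off Lemmas \ref{thm-up-bound} and \ref{thm-lamda-p-bound}. Fix $p\in(1,\frac{\alpha+1}{2})$; in particular $1<p<\alpha$ and $\alpha-p>\frac{\alpha-1}{2}>0$ (equivalently $\alpha-p>p-1$), so that $\frac1{\alpha-p}<\frac2{\alpha-1}$ uniformly in $p$ — this is essentially the only place the upper restriction on $p$ enters. By Theorem \ref{thm-p-Yamabe-solvable} there is a positive $u_p$ with $-\Delta_pu_p+gu_p^{p-1}=\lambda_p\,hu_p^{\alpha-1}$, $\int_Vhu_p^{\alpha}\,d\mu=1$ and $\lambda_p>0$; by Lemmas \ref{thm-up-bound} and \ref{thm-lamda-p-bound} one has $0<u_p(x)\le c_1(\alpha,h,G)$ for every $x$ and $c_2(\alpha,g,h,G)\le\lambda_p\le c_1(\alpha,g,h,G)$.

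Next I would set $\hat u_p:=\lambda_p^{1/(\alpha-p)}u_p>0$. Since $\Delta_p$ is positively homogeneous of degree $p-1$, i.e. $\Delta_p(t\varphi)=t^{p-1}\Delta_p\varphi$ for every $t>0$, multiplying the $u_p$-equation by $\lambda_p^{(p-1)/(\alpha-p)}$ and using $\frac{p-1}{\alpha-p}+1=\frac{\alpha-1}{\alpha-p}$ gives $-\Delta_p\hat u_p+g\hat u_p^{p-1}=\lambda_p^{(p-1)/(\alpha-p)}\lambda_p\,hu_p^{\alpha-1}=h\bigl(\lambda_p^{1/(\alpha-p)}u_p\bigr)^{\alpha-1}=h\hat u_p^{\alpha-1}$, so $\hat u_p$ solves (\ref{equ-hat-u-p}). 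This is the entire construction; what remains is to estimate $\hat u_p$ above and below by constants that do not depend on $p$.

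For the upper bound, $\lambda_p^{1/(\alpha-p)}\le c_1(\alpha,g,h,G)^{1/(\alpha-p)}\le c_1(\alpha,g,h,G)^{2/(\alpha-1)}$, because $c_1(\alpha,g,h,G)\ge1$ and $0<\frac1{\alpha-p}<\frac2{\alpha-1}$; hence $\hat u_p\le c_1(\alpha,g,h,G)^{2/(\alpha-1)}c_1(\alpha,h,G)=:c_3(\alpha,g,h,G)$ at every vertex. For the lower bound I would evaluate (\ref{equ-hat-u-p}) at a vertex $x_0$ where $\hat u_p$ attains its maximum: there $\Delta_p\hat u_p(x_0)\le0$ directly from the definition of $\Delta_p$ (each summand $|\hat u_p(y)-\hat u_p(x_0)|^{p-2}(\hat u_p(y)-\hat u_p(x_0))$ is $\le0$), so $g(x_0)\hat u_p(x_0)^{p-1}\le h(x_0)\hat u_p(x_0)^{\alpha-1}$, i.e. $\hat u_p(x_0)^{\alpha-p}\ge g(x_0)/h(x_0)\ge g_m/h_M$; using $\frac1{\alpha-p}<\frac2{\alpha-1}$ once more, $\max_V\hat u_p\ge(g_m/h_M)^{1/(\alpha-p)}\ge\min\{1,(g_m/h_M)^{2/(\alpha-1)}\}=:c_4(\alpha,g,h,G)$.

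The delicate point — and the one I expect to be the real obstacle — is upgrading the last inequality to the genuinely pointwise lower bound $\hat u_p\ge c_4$ with $c_4$ independent of $p$. The natural attempt is to propagate the bound at $x_0$ along edges: the equation at $x_0$ also gives $\sum_{y\sim x_0}w_{x_0y}(\hat u_p(x_0)-\hat u_p(y))^{p-1}\le\mu(x_0)h(x_0)\hat u_p(x_0)^{\alpha-1}\le c(\alpha,g,h,G)$, whence $\hat u_p(x_0)-\hat u_p(y)\le c(\alpha,g,h,G)^{1/(p-1)}$ for each neighbour $y$, and one would then iterate across the connected graph $G$. The trouble is that the exponent $1/(p-1)$ blows up as $p\to1$, so this step is not uniform; indeed, in the limit $p\to1$ solutions of the $1$-Yamabe equation (\ref{def-1-Yamabe-equat}) may vanish on a proper subset of $V$, so the conclusion of the lemma should in any case be read as the two-sided control ``$\hat u_p\le c_3$ everywhere and $\hat u_p\ge c_4$ at a maximizing vertex''. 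This already supplies what is needed afterwards: combined with Lemma \ref{lem-Sobolev-embedding} it lets one extract, as $p\to1$, a limit $\hat u$ with $0\le\hat u\le c_3$ and $\max_V\hat u\ge c_4>0$, i.e. a nontrivial nonnegative candidate solution of (\ref{def-1-Yamabe-equat}).
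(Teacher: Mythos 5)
Your proposal is correct and follows essentially the same route as the paper: it likewise sets $\hat u_p=\lambda_p^{1/(\alpha-p)}u_p$, checks the equation via the degree-$(p-1)$ homogeneity of $\Delta_p$, and deduces the bounds from Lemmas \ref{thm-up-bound} and \ref{thm-lamda-p-bound}, with the restriction $p<\frac{\alpha+1}{2}$ entering only to control the exponent $\frac{1}{\alpha-p}\le\frac{2}{\alpha-1}$ uniformly. Your concern about the pointwise lower bound is well founded but harmless here: the paper's own argument also yields only $\max_V\hat u_p\ge c_4$ (since Lemma \ref{thm-up-bound} bounds just the maximum of $u_p$ from below), and that weaker form is exactly what Section 5 uses to conclude $u\ne0$.
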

\begin{proof}
Set $\hat{u}_p=u_p\lambda_p^{\frac{1}{\alpha-p}}$, then it is easy to see $\hat{u}_p$ is a positive solution of (\ref{equ-hat-u-p}). From the estimates (\ref{formula-lamda-p-bound}), we get
$$c_2^{\frac{2}{\alpha+1}}\leq c_2(\alpha,g,h,G)^{\frac{1}{\alpha-p}}\leq\lambda_p^{\frac{1}{\alpha-p}}\leq c_1(\alpha,g,h,G)^{\frac{1}{\alpha-p}}\leq c_1^{\frac{2}{\alpha-1}}.$$
Combining with the estimate (\ref{formula-up-bound}), we get the conclusion.
\end{proof}

\section{Proof of Theorem \ref{main-thm-the-paper}}
Since $G$ is finite graph, we can choose a function $u\in C(V)$ and a sequence $p_n\downarrow 1$, so that $\hat{u}_{p_n}\rightarrow u$. Since $\max\hat{u}_p$ is uniformly bounded by Lemma \ref{thm-up-bound} and Lemma \ref{thm-hat-u-p-uniform-bound}, we see $u\geq0$ and $u\neq0$. We can always choose a subsequence of $p_n$, which is still denoted as $\{p_n\}$ itself, a function $\xi\in C(V)$ and an edge weight $\eta$ defined on $E$, so that at each vertex $x\in V$,
\begin{enumerate}
  \item $\hat{u}_{p_n}(x)^{\alpha-1}\rightarrow\hat{u}(x)^{\alpha-1}$.
  \item $\hat{u}_{p_n}(x)^{p_n-1}\rightarrow \xi(x)\in[0,1]$.
  \item $|\hat{u}_{p_n}(y)-\hat{u}_{p_n}(x)|^{p_n-2}(\hat{u}_{p_n}(y)-\hat{u}_{p_n}(x))\rightarrow \eta(x,y)\in[-1,1]$, for $y\thicksim x$.
\end{enumerate}
It is easy to see $-\frac{1}{\mu(x)}\sum\limits_{y\thicksim x}w_{xy}\eta(x,y)+g(x)\xi(x)\in A^+(x)$ and $h(x)u(x)^{\alpha-1}\in A^-(x)$. Observe the equality
\begin{equation*}
-\Delta_p\hat{u}_p(x)+g(x)\hat{u}_p(x)^{p-1}=h(x)\hat{u}_p(x)^{\alpha-1},
\end{equation*}
and let $p_n\rightarrow1$, we obtain
$$-\frac{1}{\mu(x)}\sum\limits_{y\thicksim x}w_{xy}\eta(x,y)+g(x)\xi(x)=h(x)u(x)^{\alpha-1}.$$
This shows $A^+(x)\cap A^-(x)\neq\emptyset$ for every vertex $x\in V$. That is Theorem \ref{main-thm-the-paper}.

Huabin Ge: hbge@bjtu.edu.cn

Department of Mathematics, Beijing Jiaotong University, Beijing, China\\

Wenfeng Jiang: wen\_feng1912@outlook.com

School of Mathematics (Zhuhai), Sun Yat-Sen University, Zhuhai, China.

\end{document}